\newtheorem{thm}{Theorem}[section]
\newtheorem{lem}[thm]{Lemma}
\newtheorem{rem}[thm]{Remark}
\numberwithin{equation}{section}
\begin{document}
\title{\textbf{A collocation method for numerical solution of Telegraph equation }}
\author{M. Zarebnia\footnote{Corresponding author: zarebnia@uma.ac.ir}\ and
R. Parvaz\footnote{rparvaz@uma.ac.ir}}
\date{}
\maketitle
\begin{center}
Department of Mathematics, University of Mohaghegh Ardabili,
56199-11367 Ardabil, Iran.
\end{center}
\begin{abstract}
\indent In this paper, B-spline collocation method is developed for
the solution of one-dimensional hyperbolic telegraph equation. The
convergence of the method is proved. Also the method is applied on
some test examples, and the numerical results have been compared
with the analytical solutions. The $L_\infty$,$L_2$ and Root-Mean-Square
errors (RMS) in the solutions show the efficiency of the method
computationally.
\end{abstract}
\vskip .3cm \indent \textit{\textbf{Keywords:}} B-spline;
Telegraph equation; Collocation method; Convergence.
\vskip .3cm
\vskip .3cm
\section{Introduction}
\indent \hskip .65cm Hyperbolic partial differential equations are
commonly used in signal analysis for transmission and propagation of
electrical signals \cite{1} and also has applications in other fields
\cite{2,3}. In the present paper, a collocation approach based on quintic
B-spline functions is utilized for the numerical solution of the
one-dimensional hyperbolic telegraph equation. In recent
years, many different methods have been used to estimate the
solution of the one-dimensional hyperbolic telegraph equation; see,
for example, [{\color{blue}4}-{\color{blue}9}]. Consider the second-order linear hyperbolic
partial differential equation in one-space dimension:
\begin{equation}\label{1}
\frac{\partial^2u} {\partial t^2} +2\alpha{\frac{\partial u}
{\partial t}}+{\beta^2}u={\frac{\partial^2u} {\partial
x^2}}+f(x,t) \ ,\ {a\leq x \leq b}\ ,\ {t\geq 0},
\end{equation}
with the initial conditions
\begin{eqnarray}\label{2}
u(x,0)=f_{0}(x), \\\label{3}
{\frac{\partial u}
{\partial t}}(x,0)=f_{1}(x),
\end{eqnarray}
and boundary conditions
\begin{eqnarray}\label{4}
u(a,t)=g_{0}(t), u(b,t)=g_{1}(t),\\\label{5}
\frac{\partial u} {\partial x}(a,t)=g_{2}(t),\frac{\partial u} {\partial x}(b,t)=g_{3}(t),
\end{eqnarray}
where $\ \alpha $ and $\ \beta$ are constants.\\
The balance of this paper is organized as follows. In Section 2,
the quintic B-spline collocation method for the numerical solution
of the one-dimensional hyperbolic telegraph equation is described.
In Section 3 we derive convergence of the
B-spline collocation method.
In Section 4, the results of numerical experiments are presented.
A summary is given at the end of the paper in Section 5.
\section{Quintic B-spline collocation method } The interval $[a,b]$ is partitioned into a mesh of
uniform length $h:=\frac{b-a}{N}$ by the knots $ x_i,i=0,1,\ldots,N$
such that $x_{i}:=x_{0}+ih$ and $a=x_0< x_1 < x_2<\ldots<x_{N-1}< x_N=b$.
To solve the equation (\ref{1}) by collocation method with quintic B-splines as basis
functions, we define the approximation $U^n(x)$ as following
\begin{equation}\label{6}
U^n(x)=\sum_{i=-2}^{N+2} {c^n_iB_i(x)}.
\end{equation}
where $U^n(x)$ is a shape function that approximates $u(x,t_n)$ for
the time level $t_n=nk$ where $k$ is a time step size.
For each time level $t_n$, the set $ \{ c^n_{-2},c^n_{-1},\ldots $
$,c^n_{N+1},c^n_{N+2} \}$
are unknown real coefficients, which are to be found, and the $B_i(x)$ are the quintic
B-spline functions defined by \cite{9,10}
\begin{equation}\label{7}
B_i(x)=\frac{1}{h^5}\left\{%
\begin{array}{ll}
(x-x_{i-3})^5, &       x\in [x_{i-3}, x_{i-2}),       \\
(x-x_{i-3})^5-6(x-x_{i-2})^5,&      x\in[x_{i-2}, x_{i-1}),       \\
(x-x_{i-3})^5-6(x-x_{i-2})^5+15(x-x_{i-1})^5,&      x\in[x_{i-1}, x_{i}),       \\
(x_{i+3}-x)^5-6(x_{i+2}-x)^5+15(x_{i+1}-x)^5,&      x\in[x_{i}, x_{i+1}),       \\
(x_{i+3}-x)^5-6(x_{i+2}-x)^5,&      x\in[x_{i+1}, x_{i+2}),       \\
(x_{i+3}-x)^5, &       x\in [x_{i+2}, x_{i+3}),       \\
\end{array}%
\right.
\end{equation}
where $B_{-2}, B_{-1}, B_0, B_1,\ldots,B_{N+1},B_{N+2}$ form a
basis over the region $a \leq x \leq b$. The values of $B_i(x)$
and its derivatives may be tabulated as in Table \ref{table:t1}. Using
approximate function (\ref{6}) and Table \ref{table:t1}, we have
\begin{equation}\label{8}
u(x_i,t_n)\approx U^{n}_{i}= c^{n}_{i-2}+26c^{n}_{i-1}
+66c^{n}_{i}+26c^{n}_{i+1}+c^{n}_{i+2},
\end{equation}
\begin{equation}\label{9}
\frac{\partial u}{\partial x}(x_i,t_n)\approx \big(U^{\prime}\big)^{n}_{i} =\frac{1}{h}(-5c^{n}_{i-2}-50c^{n}_{i-1}+50c^{n}_{i+1}+5c^{n}_{i+2}),
\end{equation}
\begin{equation}\label{10}
\frac{\partial^2 u}{\partial x^2}(x_i,t_n)\approx
\big(U^{\prime\prime}\big)^{n}_{i}=\frac{1}{h^2}(20c^{n}_{i-2}
+40c^{n}_{i-1}-120c^{n}_{i}+40c^{n}_{i+1}+20c^{n}_{i+2}).
\end{equation}
\begin{center}
\begin{table}[ht!]
\caption{$B_i $,$B^{'}_i $  and $B^{''}_i $ at the node points.} 
\label{table:t1} 
\centering 
\begin{tabular}{l l l l l l l l l l} 
\hline
$x$ & $x_{i-3}$ & $x_{i-2}$& $x_{i-1}$& $x_{i}$& $ x_{i+1}$& $ x_{i+2}$& $ x_{i+3}$\\
\hline
\hline
$B_i(x)$ & 0 &1&26&66&26&1&0\\
$hB'_i(x)$ & 0 &5&50&0&-50&-5&0\\
$h^2B''_i(x)$ & 0 &20&40&-120&40&20&0\\
\hline
\hline
\end{tabular}
\end{table}
\end{center}
To apply the proposed method, discretizing the time derivative in
the usual finite difference way, with using following finite difference formulae \cite{NI}, we can write:
\begin{equation}\label{11}
\big(\frac{\partial^2u}{\partial t^2}\big)^{n}_{i}\approx\frac{u^{n+1}_{i}-2u^{n}_{i}+u^{n-1}_{i}}{\Gamma(k)^2},
\end{equation}
\begin{equation}\label{12}
\big(\frac{\partial u}{\partial t}\big)^{n}_{i}\approx\frac{u^{n+1}_i-u^{n-1}_i}{2\Gamma(k)},
\end{equation}
\begin{equation}\label{14}
\big(\frac{\partial^2u}{\partial x^2}\big)^{n}_{i}\approx\frac{\big(\frac{\partial^2u}{\partial x^2}\big)^{n-1}_{i}+
\big(\frac{\partial^2u}{\partial x^2}\big)^{n+1}_{i}}{2},
\end{equation}
where $k$ is a time step size, $\big(\frac{\partial^2
u}{\partial x^2}\big)^n_i:=\frac{\partial^2 u}{\partial
x^2}(x_i,t_{n})$, $u^n_i:=u(x_i,t_{n})$ and
$\Gamma(k)$ is a selected function of $k$
satisfying the following equation
\begin{equation}\label{15}
\Gamma(k)^2=(k)^2(1+\mathcal{O}(k)^j),\hskip .45cm j=0,1,\ldots.
\end{equation}
In the numerical computations, we applied the following two
possible choices for $\Gamma(k)$ to improve the accuracy:
$k$ and $2\sin( \frac{k}{2})$. Hence (\ref{1}) can
be written as:
\begin{equation}\label{16}
\frac{u^{n+1}_{i}-2u^{n}_{i}+u^{n-1}_{i}}{\Gamma(k)^2}+ 2
\alpha \frac{u^{n+1}_{i}-u^{n-1}_{i}}{2\Gamma(k)}+ \beta
^2u^{n}_{i}=\frac{\big(\frac{\partial^2u}{\partial x^2}\big)^{n-1}_{i}+\big(
\frac{\partial^2 u}{\partial x^2}\big)^{n+1}_{i}}{2}+f(x_{i},t_n).
\end{equation}
Rearranging the terms and simplifying we get
\begin{equation}\label{17}
vu_{i}^{n+1}+w\big(\frac{\partial^2u}{\partial x^2}\big)_{i}^{n+1}
=\Phi^n(x_i),
\end{equation}
where
\begin{equation}\label{18}
\Phi^n(x_i):=\Big(2-\big(\beta \Gamma(k)\big)^2\Big)u^{n}_i+\big(\alpha \Gamma(k)-1\big)u^{n-1}_i+
\frac{\Gamma(k)^2}{2}\big(\frac{\partial^2u}{\partial x^2}\big)^{n-1}_i
+\Gamma(k)^2f(x_i,t_n),
\end{equation}
\begin{equation}\label{19}
v:=1+\alpha \Gamma(k),
\end{equation}
\begin{equation}\label{20}
w:=-\frac{\Gamma(k)^2}{2}.
\end{equation}
Substituting the approximate solution $U$ for $u$ and
putting the values (\ref{8}) and (\ref{10}) in (\ref{17})
yields the following difference equation with the variables $c_i,
i=-2,\ldots,N+2$.
\begin{align}\label{21}
&vU_{i}^{n+1}+w\big(U^{\prime\prime}\big)_{i}^{n+1}=\nonumber\\
&(v+20\frac{w}{h^2})c^{n+1}_{i+2}+(26v+40\frac{w}{h^2})c^{n+1}_{i+1}
+(66v-120\frac{w}{h^2})c^{n+1}_{i}\nonumber\\
&+(26v+40\frac{w}{h^2})c^{n+1}_{i-1}+(v+20\frac{w}{h^2})c^{n+1}_{i-2}=\Psi^n_{i},
\,\, i=0,\ldots,N,
\end{align}
where
\begin{eqnarray}\label{22}
\Psi^n_{i}:=\Big(2-\big(\beta \Gamma(k)\big)^2\Big)U^{n}_i
+\big(\alpha \Gamma(k)-1\big)U^{n-1}_i\nonumber\\
+\frac{\Gamma(k)^2}{2}\big(U^{\prime\prime}\big)^{n-1}_i
+\Gamma(k)^2f(x_i,t_n).
\end{eqnarray}
The system (\ref{21}) consists of $(N + 1)$ linear equations in $(N +
5)$ unknowns  $\widetilde{C}:=\{c_{-2},c_{-1},...,c_{N+1},c_{N+2}\}$. To obtain a
unique solution for $\widetilde{C}$ we
must use the boundary conditions. From the boundary conditions we can write
\begin{eqnarray}\label{23}
c^{n+1}_{-2}+26c^{n+1}_{-1}+66c^{n+1}_{0}+26c^{n+1}_{1}+c^{n+1}_{2}=g_0(t_{n+1}),\\\label{24}
c^{n+1}_{N-2}+26c^{n+1}_{N-1}+66c^{n+1}_{N}+26c^{n+1}_{N+1}+c^{n+1}_{N+2}=g_1(t_{n+1}),
\end{eqnarray}
and
\begin{eqnarray}\label{25}
\frac{1}{h}(-5c^{n+1}_{-2}-50c^{n+1}_{-1}+50c^{n+1}_{1}+5c^{n+1}_{2})=g_{2}(t_{n+1}),\\\label{26}
\frac{1}{h}(-5c^{n+1}_{N-2}-50c^{n+1}_{N-1}+50c^{n+1}_{N+1}+5c^{n+1}_{N+2})=g_{3}(t_{n+1}).
\end{eqnarray}

By using (\ref{23})-(\ref{26}), we obtain
\begin{align}\label{27}
&c^{n+1}_{-1}=-\frac{33}{8}c^{n+1}_0-\frac{9}{4}c^{n+1}_1
-\frac{1}{8}c^{n+1}_2+\frac{hg_2(t_{n+1})}{80}+\frac{g_0(t_{n+1})}{16},\\\label{28}
&c^{n+1}_{-2}=\frac{165}{4}c^{n+1}_0+\frac{65}{2}c^{n+1}_1
+\frac{9}{4}c^{n+1}_2-\frac{13hg_2(t_{n+1})}{40}
-\frac{5g_0(t_{n+1})}{8},\\\label{29}
&c^{n+1}_{N+1}=-\frac{33}{8}c^{n+1}_N-\frac{9}{4}c^{n+1}_{N-1}
-\frac{1}{8}c^{n+1}_{N-2}-\frac{hg_3(t_{n+1})}{80}+\frac{g_1(t_{n+1})}{16},\\\label{30}
&c^{n+1}_{N+2}=\frac{165}{4}c^{n+1}_N+\frac{65}{2}c^{n+1}_{N-1}
+\frac{9}{4}c^{n+1}_{N-2}+\frac{13hg_3(t_{n+1})}{40}-\frac{5g_1(t_{n+1})}{8}.
\end{align}

Hence we have the following system consists of $(N + 1)$ linear equations in $(N +
1)$ unknowns  $\{c_{0},c_{1},...,c_{N-1},c_{N}\}$.
The B-spline method in matrix form can be written as follows :
\begin{equation}\label{28}
AC=Q,
\end{equation}
where
\begin{align}\label{29}
&\mathbf{A}:=\nonumber\\
&\left(
\begin{array}{cccccccccccc}
\acute{c}-\frac{33\acute{b}}{8}+\frac{165\acute{a}}{4} & \frac{65\acute{a}}{2}-\frac{5\acute{b}}{4}&\frac{13\acute{a}}{4}-\frac{\acute{b}}{8}&0&\ldots&0 \\
\acute{b}-\frac{33\acute{a}}{8}& \acute{c}-\frac{9\acute{a}}{4} & \acute{b}-\frac{\acute{a}}{8} & a  &0&\ldots&0 \\
\acute{a}& \acute{b} & \acute{c}&\acute{b}&\acute{a}&\ldots&0 \\
\vdots&\ddots &\ddots&\ddots &\ddots&\ddots&\vdots&\\
0 & \ldots&\acute{a}& \acute{b}& \acute{c}&\acute{b}&\acute{a}\\
0&\ldots &0&\acute{a}& \acute{b}-\frac{33\acute{a}}{8}& \acute{c}-\frac{9\acute{a}}{4} & \acute{b}-\frac{\acute{a}}{8}\\
0&\ldots &0& 0&\acute{c}-\frac{33\acute{b}}{8}+\frac{165\acute{a}}{4} & \frac{65\acute{a}}{2}-\frac{5\acute{b}}{4}&\frac{13\acute{a}}{4}-\frac{\acute{b}}{8}\\
\end{array} \right),
\end{align}

\begin{equation}\label{30}
C:=\Big(c^{n+1}_{0},c^{n+1}_{1},\ldots,c^{n+1}_{N-1},c^{n+1}_{N}\Big)^T,
\end{equation}
and
\begin{equation}\label{31}
\mathbf{Q}:= \left(
\begin{array}{cccccccccccc}
\Psi^n(x_0)+(-\frac{bh}{80}+\frac{13ah}{40})g_2(t_{n+1})+(-\frac{b}{16}+\frac{5a}{8})g_0(t_{n+1})\\
\Psi^n(x_1)-\frac{ha}{80}g_2(t_{n+1})-\frac{a}{16}g_0(t_{n+1})\\
\Psi^n(x_2)\\
\vdots\\
\Psi^n(x_{N-2})\\
\Psi^n(x_{N-1})+\frac{ha}{80}g_3(t_{n+1})-\frac{a}{16}g_1(t_{n+1})\\
\Psi^n(x_N)+(\frac{bh}{80}-\frac{13ah}{40})g_3(t_{n+1})+(-\frac{b}{16}+\frac{5a}{8})g_1(t_{n+1}))\\
\end{array} \right),
\end{equation}
with
\begin{align}\label{32}
&\acute{a}:=v+20\frac{w}{h^2},             \\\label{33}
&\acute{b}:=26v+40\frac{w}{h^2},           \\\label{34}
&\acute{c}:=66v-120\frac{w}{h^2}.
\end{align}
The computer algebra system $Mathematica$-$9$ is used for solving the
system (\ref{28}). To start any computation, it is necessary to know the value of $u$ at the nodal points of first
time level, that is, at $t =k$. A Taylor series expansion
at $t =k$ may be written as
\begin{equation}\label{35}
u(x,k)=f_{0}(x)+kf_{1}(x)+\frac{k^2}{2}\big(f(x,0)-\beta^2f_{0}(x)-2\alpha f_{1}(x)+\frac{\partial
f_{0}(x)}{\partial x}\big)+\mathcal{O}(k^3).
\end{equation}
\vskip .2cm
\section{Convergence analysis}
\begin{thm}\label{th0}
Suppose that  $u(x,t)$ be the exact
solution of (\ref{1}) and $u(x,t)\in \mathcal{C}^5[a,b]$ also $|\frac{\partial^5u(x,t)}{\partial x^5}|\leq L$ and $U(x,t)$ be the numerical approximation  by our methods, then we can write
\begin{equation}
\parallel u(x,t)-U(x,t) \parallel_{\infty}=\mathcal{O}(h^2+k).
\end{equation}
\end{thm}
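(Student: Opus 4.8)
The plan is to estimate $\|u-U\|_\infty$ by inserting the quintic spline interpolant of the exact solution as an intermediate object. Fix a time level $t_n$ and let $S(\cdot,t_n)=\sum_{i=-2}^{N+2}\sigma_i^nB_i(x)$ be the unique quintic spline interpolating $u(\cdot,t_n)$ at $x_0,\dots,x_N$ and matching the end data in (\ref{4})--(\ref{5}), so that $\|u(\cdot,t_n)-U^n\|_\infty\le\|u(\cdot,t_n)-S(\cdot,t_n)\|_\infty+\|S(\cdot,t_n)-U^n\|_\infty$. For the first term I would cite the classical error bounds for quintic B-spline interpolation (the sources referenced for (\ref{7})): since $u(\cdot,t)\in\mathcal{C}^5$ with $|\partial_x^5u|\le L$, one has $\|u-S\|_\infty=\mathcal{O}(h^2)$ and $\|\partial_x^2(u-S)\|_\infty=\mathcal{O}(h^2)$ (both hold to higher order in fact, but $\mathcal{O}(h^2)$ is all that is needed). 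This term already accounts for the $\mathcal{O}(h^2)$ in the statement.

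For the second term I would derive and propagate the equation for the coefficient error $e^n:=\sigma^n-c^n$. Substituting the exact solution into the time-marching identity (\ref{16})--(\ref{17}) and using that $u$ solves (\ref{1}): the defect generated by the difference formulae (\ref{11}), (\ref{12}), the time-averaging (\ref{14}), and the replacement of $k$ by $\Gamma(k)$ under (\ref{15}) is, by Taylor's theorem, of size $\mathcal{O}(k)$ for the least favourable admissible $\Gamma$ (and $\mathcal{O}(k^2)$ for the two concrete choices $\Gamma(k)=k$ and $\Gamma(k)=2\sin(k/2)$); this produces the $\mathcal{O}(k)$ term. Substituting $S$ for $u$ in the collocation system (\ref{21})--(\ref{26}) costs only an extra $\mathcal{O}(h^2)$ weighted by $w=-\Gamma(k)^2/2$, since $S$ reproduces $u^n_i$ exactly in (\ref{8}) and perturbs (\ref{10}) by $\mathcal{O}(h^2)$. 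Subtracting the equations satisfied by $S$ from those defining $c^{n+1}$ and eliminating the ghost coefficients via (\ref{23})--(\ref{27}) yields a two-step recursion
\[
\mathbf{A}\,e^{n+1}=\mathbf{M}_1e^n+\mathbf{M}_2e^{n-1}+r^n,
\]
where $\mathbf{A}$ is the band matrix of the system $AC=Q$, the $\mathbf{M}_j$ are band matrices whose entries are $2-(\beta\Gamma(k))^2$, $\alpha\Gamma(k)-1$, $\tfrac{\Gamma(k)^2}{2}$ times the stencils of Table~\ref{table:t1}, and $\|r^n\|_\infty=\mathcal{O}\big(k(h^2+k)\big)$; the starting errors satisfy $\|e^0\|_\infty=\mathcal{O}(h^2)$ and $\|e^1\|_\infty=\mathcal{O}(h^2+k^3)$, the latter using the Taylor initialisation (\ref{35}).

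It then remains to establish stability and close with a discrete Gronwall inequality. I would show $\mathbf{A}$ is invertible with $\|\mathbf{A}^{-1}\|_\infty$ bounded uniformly in $h,k$: by (\ref{32})--(\ref{34}) together with $v=1+\alpha\Gamma(k)$ and $w=-\Gamma(k)^2/2$, the interior rows are a perturbation of size $\mathcal{O}(\alpha\Gamma(k))+\mathcal{O}(\Gamma(k)^2/h^2)$ of the strictly diagonally dominant stencil $(1,26,66,26,1)$ (note $66>1+26+26+1$) and therefore remain diagonally dominant, while the modified boundary rows are treated directly; this step uses a mild restriction on the steps, e.g.\ $k=\mathcal{O}(h)$, which also keeps $\|\mathbf{M}_1\|_\infty,\|\mathbf{M}_2\|_\infty$ bounded with $\|\mathbf{M}_1+\mathbf{M}_2\|_\infty\le 1+Ck$. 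Then $\|e^{n+1}\|_\infty\le(1+Ck)\max(\|e^n\|_\infty,\|e^{n-1}\|_\infty)+\|\mathbf{A}^{-1}\|_\infty\|r^n\|_\infty$, and iterating over the time levels $n\le T/k$ with $\sum_n\|r^n\|_\infty=\mathcal{O}(h^2+k)$ gives $\|e^n\|_\infty\le Ce^{CT}(h^2+k)$. Since $\|S^n-U^n\|_\infty\le\|e^n\|_\infty\max_x\sum_i|B_i(x)|$, combining with the first paragraph proves the theorem.

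The step I expect to be the main obstacle is the stability analysis: obtaining a uniform bound on $\|\mathbf{A}^{-1}\|_\infty$ while the interior entries depend on both $h$ and $k$ and the boundary rows are the nonstandard ones coming from (\ref{23})--(\ref{27}), and simultaneously arranging that the $\mathcal{O}(T/k)$ accumulated per-step defects $r^n$ do not spoil the $\mathcal{O}(h^2+k)$ rate — which, as for any space-collocation/time-implicit B-spline scheme of this kind, is where a relation between $k$ and $h$ has to enter. The spline interpolation estimates of the first paragraph and the Taylor expansions of the truncation terms are, by comparison, routine.
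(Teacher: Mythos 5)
Your spatial analysis is essentially the paper's: the paper likewise inserts the quintic spline interpolant $S^*$ of the exact solution as an intermediate object, bounds $\parallel u-S^*\parallel_\infty$ by the interpolation estimate of Theorem \ref{th1}, shows $\parallel Q^*-Q\parallel_\infty=\mathcal{O}(h^2)$, inverts the collocation matrix via the row-sum bound (\ref{c11}) on $\parallel A^{-1}\parallel_\infty$, and converts the coefficient error into a function error through the bound $\sum_i|B_i(x)|\le 186$ of Lemma \ref{lm1} (your $\max_x\sum_i|B_i(x)|$). Where you genuinely diverge is the temporal part. The paper does not set up your coupled two-step recursion $\mathbf{A}e^{n+1}=\mathbf{M}_1e^n+\mathbf{M}_2e^{n-1}+r^n$ with a discrete Gronwall closure; instead it treats the time error in a separate, decoupled paragraph: it asserts local truncation errors $|\varepsilon_i|\le\varrho_i k^2$ per step, sums them over $n\le T/k$ steps to obtain a global $\mathcal{O}(k)$ bound (\ref{c3}), and then performs the spatial analysis at a single level $n+1$ as though the data at levels $n$ and $n-1$ were exact. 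Your route is the standard rigorous one and buys an actual proof that the per-step defects do not amplify over $\mathcal{O}(T/k)$ steps --- at the cost of the stability analysis and a step-size restriction such as $k=\mathcal{O}(h)$, which you correctly identify as the crux. The paper's route is shorter but leaves exactly the gaps you anticipated: the propagation of the spatial error through the right-hand side $\Psi^n$ (which depends on $U^n$ and $U^{n-1}$) is never accounted for; the constant $\Lambda$ in (\ref{c11}) is asserted rather than shown to be bounded away from zero (your diagonal-dominance argument for the stencil $(1,26,66,26,1)$ perturbed by $\mathcal{O}(\Gamma(k))+\mathcal{O}(\Gamma(k)^2/h^2)$ is precisely the missing justification); and the final inequality (\ref{c17}) exhibits only the $\mathcal{O}(h^2)$ term, the $\mathcal{O}(k)$ contribution never being formally combined with it. In short: same skeleton for the space discretization, but your handling of the time marching is a genuinely different and more complete argument than what the paper records.
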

Before we prove, we recall following theorem and lemma.
\begin{thm}\label{th1}
Suppose that $f(x)\in \mathcal{C}^5[a,b]$. Then for the unique quintic spline $S(x)$
associated with $f$, we have
\begin{equation}\label{32}
\parallel f^{(j)}-S^{(j)} \parallel_{\infty}\leq K_j\omega_{5}(h) h^{4-j}\ ,\
j=0,1,2,3.
\end{equation}
where $\omega_{5}(h)$ denotes the modulus of continuity of $f^{(5)}$ and the coefficients $\lambda_j$
are independent of f and h.
\end{thm}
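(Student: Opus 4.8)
The plan is to follow the classical moment-equation analysis of interpolatory spline error, adapted to the quintic case. First I would pin down the conditions that make $S$ the unique quintic spline associated with $f$: on each subinterval $[x_i,x_{i+1}]$ the restriction of $S$ is a polynomial in $\mathcal{P}_5$, the global spline lies in $\mathcal{C}^4[a,b]$, $S$ interpolates $f$ at the knots $x_i$, and the remaining degrees of freedom are fixed by the end conditions used in the construction. Because a quintic piece is determined by the function values together with the second and fourth derivatives at its two endpoints, I would take the nodal moments $M_i:=S''(x_i)$ and $G_i:=S^{(4)}(x_i)$ as the unknowns. Imposing continuity of $S'$ and $S'''$ across the interior knots then yields a banded linear system $T\mathbf{m}=\mathbf{r}$ for the moment vector $\mathbf{m}$, whose right-hand side $\mathbf{r}$ is assembled from divided differences of $f$ at the knots.

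Next I would perform the consistency step. Substituting the exact values $f''(x_i)$ and $f^{(4)}(x_i)$ into the same relations and expanding $f$ by Taylor's formula with integral (Peano) remainder, the defect by which the exact data fails to satisfy the spline equations is controlled by the fifth derivative of $f$; since $f\in\mathcal{C}^5[a,b]$ this remainder is estimated through the modulus of continuity $\omega_5(h)$, producing a residual vector of size $\mathcal{O}(\omega_5(h))$ up to the appropriate power of $h$. Subtracting the spline system from these exact relations gives a system with the same matrix $T$ for the nodal errors $e_i:=f''(x_i)-M_i$ (and analogously for the fourth derivative), driven by this residual.

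The core of the argument is the stability of $T$. I would show that $T$ is strictly diagonally dominant with constants independent of $h$ and $N$, so that $\|T^{-1}\|_{\infty}$ is bounded uniformly; the nonstandard boundary rows arising from the end conditions are treated separately and shown not to spoil this bound. This converts the residual estimate into a uniform bound on the nodal derivative errors, after which the explicit polynomial form of $S$ on each subinterval turns the nodal bounds into the pointwise estimates $\|f^{(j)}-S^{(j)}\|_{\infty}\le K_j\,\omega_5(h)\,h^{4-j}$ for $j=0,1,2,3$, the factor $h^{4-j}$ emerging from differentiating the local Taylor remainder $j$ times. I expect the main obstacle to be precisely this stability estimate together with the bookkeeping at the boundary: one must verify that the end-condition rows keep $T$ uniformly invertible and that the modulus-of-continuity residual propagates through $T^{-1}$ without any loss in the claimed powers of $h$.
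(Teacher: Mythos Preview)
The paper does not actually prove this theorem: its entire proof is the single line ``For the proof see \cite{11}'' (Pittaluga and Sacripante, \emph{Acta Math. Hung.} 72 (1996)). So there is no in-paper argument to compare your proposal against; the result is quoted as a known approximation-theoretic fact about quintic spline interpolation on uniform meshes.

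That said, your outline is the standard route and is essentially what the cited reference (and the broader literature going back to Hall--Meyer type arguments) carries out: parametrize the interpolant by nodal moments, derive a banded system from the $\mathcal{C}^4$ continuity conditions, establish uniform diagonal dominance to bound $\|T^{-1}\|_\infty$, estimate the consistency residual via a Peano/Taylor remainder controlled by $\omega_5(h)$, and then propagate nodal derivative errors to pointwise bounds on each subinterval. Your identification of the main technical burden---uniform stability of $T$ including the boundary rows, and checking that no power of $h$ is lost---is accurate. If you want to match the paper's use of the theorem you need only the $j=0$ and $j=2$ cases, but the full statement for $j=0,1,2,3$ follows from the same machinery once the moment errors are in hand.
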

\begin{proof}
For the proof see \cite{11}.
\end{proof}
\begin{rem}
By using Theorem \ref{th1} and definition of the modulus of continuity, we can say that if $|f^{(5)}(x)|\leq L$,
we can write (\ref{32}) as
\begin{equation}\label{33}
\parallel f^{(j)}-S^{(j)} \parallel_{\infty}\leq\lambda_jLh^{4-j}\ ,\
j=0,1,2,3.
\end{equation}
\end{rem}
\begin{lem}\label{lm1}
For the B-splines $\{B_{-2},\cdots,B_{N+2}\}$ we have
the following inequality:
\begin{equation}
|\sum_{i=-2}^{N+2}B_i(x)| \leq 186 , \ \,\ (a \leq  x \leq b).
\end{equation}
\end{lem}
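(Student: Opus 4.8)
The plan is to localize the sum on each knot subinterval and bound the (finitely many) nonzero terms one by one. Fix $x\in[a,b]$ and let $j\in\{0,1,\dots,N-1\}$ be the index with $x\in[x_j,x_{j+1}]$. From the definition (\ref{7}), the support of $B_i$ is $[x_{i-3},x_{i+3}]$, so $B_i(x)=0$ unless $i\in\{j-2,j-1,j,j+1,j+2,j+3\}$; all six of these indices lie in $\{-2,\dots,N+2\}$, so
\[
\sum_{i=-2}^{N+2}B_i(x)=\sum_{i=j-2}^{j+3}B_i(x).
\]

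Next I would bound each of these six functions on $[x_j,x_{j+1}]$. Each $B_i$ is nonnegative, and from the explicit piecewise formula (\ref{7}) it is unimodal with its unique maximum at the knot $x_i$: nondecreasing on $[x_{i-3},x_i]$ and nonincreasing on $[x_i,x_{i+3}]$. Hence $B_i$ is monotone on $[x_j,x_{j+1}]$, so it is bounded there by the larger of its two nodal endpoint values, which are read off from Table \ref{table:t1}. For $i=j-2,\ j-1,\ j,\ j+1,\ j+2,\ j+3$ the pair $\bigl(B_i(x_j),B_i(x_{j+1})\bigr)$ equals $(1,0),\ (26,1),\ (66,26),\ (26,66),\ (1,26),\ (0,1)$ respectively, so on $[x_j,x_{j+1}]$ we have $B_i(x)\le 1,\ 26,\ 66,\ 66,\ 26,\ 1$ in turn.

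Adding these six bounds gives, for every $x\in[x_j,x_{j+1}]$,
\[
0\le\sum_{i=-2}^{N+2}B_i(x)=\sum_{i=j-2}^{j+3}B_i(x)\le 1+26+66+66+26+1=186 ,
\]
and since $j$ was arbitrary and each $B_i\ge 0$, this yields $\bigl|\sum_{i=-2}^{N+2}B_i(x)\bigr|\le 186$ for all $x\in[a,b]$, as claimed.

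The step I would expect to need the most care is the unimodality/monotonicity of a single $B_i$ on each knot subinterval; everything else is just bookkeeping with the support and the tabulated nodal values. This is a standard property of B-splines, but a self-contained proof is short: writing the argument as $x_{i-3}+sh$ with $s\in[0,6]$, each of the six pieces of $h^{5}B_i'$ is, up to the factor $5h^{4}$, a low-degree polynomial in $s$ whose sign is immediate, giving $B_i'\ge 0$ on $[x_{i-3},x_i]$; the symmetry $B_i(x_i+s)=B_i(x_i-s)$ then handles $[x_i,x_{i+3}]$. (One could even avoid monotonicity entirely by bounding $B_i\le 66$ on the two "central" pieces and $B_i\le 26$ on the four "side" pieces directly from (\ref{7}), which gives the same constant $186$.)
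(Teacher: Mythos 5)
Your argument is correct and is essentially the paper's own proof: on each knot interval $[x_j,x_{j+1}]$ only the six splines $B_{j-2},\dots,B_{j+3}$ are nonzero, and bounding them by $1,26,66,66,26,1$ and summing gives $186$. You are in fact more careful than the paper, which simply asserts these six pointwise bounds, whereas you supply the missing justification (each $B_i$ is unimodal with peak at $x_i$, hence bounded on any subinterval by the larger of its two nodal values from Table \ref{table:t1}); the only slip is in your optional parenthetical, where taking $66$ on the two central pieces and $26$ on all four side pieces yields $2\cdot 66+4\cdot 26=236$ rather than $186$, so you would still need the sharper bound $1$ on the two outermost pieces.
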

\begin{proof}
From the real analysis we have
$|\sum_{i=-2}^{N+2}B_i(x)| \leq \sum_{i=-2}^{N+2}|B_i(x)|$.
If $x=x_i,$ $i=1,\ldots,N,$ then, we have
\begin{align}
|\sum_{i=-2}^{N+2}B_i(x)| = 120\leq 186,
\end{align}
and if $x_{i-1}\leq x \leq x_{i}$, then, we can write
\begin{align}\nonumber
|\sum_{i=-2}^{N+2}B_i(x)| &\leq\mid B_{i-3}(x) \mid+ \mid B_{i-2}(x)
+\mid B_{i-1}(x) \mid+\mid B_{i}(x) \mid\mid\nonumber\\
&+\mid B_{i+1}(x)\mid+\mid B_{i+2}(x)\mid\nonumber
\leq
1+26+66+66+26+1\leq
186
\end{align}
\end{proof}
Now we prove theorem \ref{th0}.
\begin{proof}
Suppose that $\varepsilon _{i}=u(t_i)-U^i$ be
the local truncation error for (\ref{16}) at the $i$th.
By using the truncation error, we can write
\begin{equation}\label{c1}
\mid\varepsilon_{i}\mid \leq \varrho_{i}k^2 \ ,\ i \geq 2.
\end{equation}
In addition we have $\mid\varepsilon_{1}\mid \leq \varrho_{1}k^3$.
To continue we assume that  $e_{n+1}$ be the global error in time discretizing process and
$\varrho=max\{{\varrho_1,...,\varrho_n}\}$. We can write the following global error estimate at $n+1$ level
\begin{equation}\label{c2}
e_{n+1}=\sum_{i=1}^{n}
{\varepsilon_i},~~~~(k\leq T/n),
\end{equation}
with the help of (\ref{c1})-(\ref{c2}), we can write
\begin{equation}\label{c3}
\mid e_{n+1}\mid=\mid \sum^{n}_{i=1}\varepsilon_i  \mid \leq
n\varrho k^2\leq n\varrho \frac{T}{n}k=\rho k,
\end{equation}
where  $ \rho= \varrho T.$\\
Now at the $(n + 1)$th time step we assume that $u(x)$ be the exact solution of (\ref{17})
and $U(x)=\sum_{i=-2}^{N+2}
c_iB_i(x)$ be the B-spline approximation to
$u(x)$. Also we assume
that $S^{*}(x)=\sum_{i=-2}^{N+2}c^*_iB_i(x)$ be
the unique spline interpolate to the exact solution. In order to derive a bound for $\parallel
u(x)-U(x)\parallel_\infty$, we need to estimate the
$\parallel u(x)-S^*(x)\parallel_\infty$ and $\parallel
S^*(x)-U(x)\parallel_\infty$. Now we substituting $S^*(x)$ in (\ref{17})
the we get the following result
\begin{equation}\label{c4}
AC^*=Q^*,
\end{equation}
With considering (\ref{28}) and (\ref{c4}), we get
\begin{equation}\label{c5}
A(C^*-C)=(Q^*-Q).
\end{equation}
From (\ref{21}), we can writte
\begin{align}\label{c6}
|\Psi^*_{i}-\Psi_{i}|\leq v\mid S^*(x_i)-U(x_i)\mid+w\mid S^{*''}(x_i)-U^{''}(x_i) \mid.
\end{align}
By using (\ref{c6}) and Theorem \ref{th1}, we can write
\begin{equation}\label{c7}
\parallel Q^*-Q\parallel_{\infty}\leq M_1h^2,
\end{equation}
where $M_1=v\lambda_0Lh^2+w\lambda_2L$.
In this step from (\ref{c5}), we can write
\begin{equation}\label{c8}
(C^*-C)=A^{-1}(Q^*-Q).
\end{equation}
By taking the infinity norm from (\ref{c8}) and applying (\ref{c7}), we get
\begin{equation}\label{c9}
\parallel C^*-C\parallel_{\infty}\leq \parallel A^{-1}\parallel_{\infty}\parallel Q^*-Q \parallel_{\infty} \leq M_1h^2\parallel
A^{-1}\parallel_{\infty}.
\end{equation}
By using the theory of matrices, we can write
\begin{equation}\label{c10}
\sum_{i=1}^{N+1}a^{-1}_{ki}\eta_i =1,
\end{equation}
where $a^{-1}_{ki}$ are the elements of $A^{-1}$  and  $\eta_i$ $(1\leq i\leq N+1)$ is the summation of the
$i$th row of the matrix A. As a result  we
can write
\begin{equation}\label{c11}
\|A^{-1}\|_{\infty}=\sum_{i=1}^{N+1}|a^{-1}_{ki}|\leq\frac{1}{\min_{1 \leq
i\leq N}\eta_i}\leq\frac{1}{\Lambda},
\end{equation}
where $\Lambda$ is is constant. Following result is obtained by
substituting (\ref{c11}) into (\ref{c9}), we get
\begin{equation}\label{c12}
\parallel C^*-C \parallel_{\infty}\leq \frac{M_1h^2}{\Lambda} \leq
M_2h^2,
\end{equation}
where $M_2=\frac{M_1}{\Lambda}$ is constant.
Considering the B-spline collocation approximation
and the computed spline approximation, we can write:
\begin{equation}\label{c14}
S^*(x)-U(x)=\sum_{i=-2}^{N+2}(c^*_i-c_i)B_i(x),
\end{equation}
taking norm from (\ref{c14}) and by using (\ref{c12}) and lemma \ref{lm1}, we obtain
\begin{equation}\label{c15}
\parallel S^*(x)-U(x) \parallel_{\infty}= \parallel \sum_{i=-2}^{N+2}(c^*_i-c_i)B_i(x)\parallel_{\infty} \leq \big| \sum_{i=-2}^{N+2}B_i(x)\big| \parallel C^*-C \parallel_{\infty} \leq
186M_2h^2.
\end{equation}
Also from Theorem \ref{th1} we can write
\begin{equation}\label{c16}
\parallel u-S^*(x) \parallel_{\infty} \leq \lambda_0Lh^4,
\end{equation}
and therefore with helping (\ref{c15}) and (\ref{c16}) we get
\begin{equation}\label{c17}
\parallel  u-U(x) \parallel_{\infty}  \leq\varpi h^2,
\end{equation}
where $\varpi=\lambda_0Lh^2+186M_2.$
\end{proof}

\section{Numerical examples}
\indent \hskip .65cm In order to illustrate the performance of the
quintic B-spline collocation method in solving the One-dimensional
hyperbolic telegraph  equation and justify the accuracy and
efficiency of the present method, we consider the following
examples. To show the efficiency of the present method for our
problem in comparison with the exact solution, we report the RMS
error, $L_\infty$ and $L_2$ using formulae
\begin{align}\nonumber
&RMS= \frac{(\sum^{N}_{i=1}|u(x_i,t)-U_n(x_i)|^2)^{\frac{1}{2}}}{
N^{\frac{1}{2}}},\\
&L_\infty=\max_i|U_n(x_i)-u(x_i,t)|,\,
L_2=h|\sum^{N}_{i=1}(u(x_i,t)-U_n(x_i)|^2,\nonumber
\end{align}
where $U(x,t)$ denotes numerical solution and $u(x,t)$ denotes
analytical solution.
\vskip .3cm \noindent \textbf{Example 1.} Consider the hyperbolic
telegraph equation (\ref{1}) with $\alpha=\pi,\beta=\pi$, in the interval
$[0,1]$. In this case we have
$f(x,t)=\pi ^2 \sin (\pi  x) (\sin (\pi  t)+2 \cos (\pi  t))$. The analytical solution
given  by $u(x,t)=\sin (\pi  t) \sin (\pi  x)$. The boundary conditions and
the initial conditions are taken from the exact solution. Table \ref{table:t11} shows
the absolute error between the analytical solution and the numerical
solution at different points for $t=0.5$. Table \ref{table:t12} shows
the $L_2$ errors at different partitions.
The graph of the solution is given in Figure {\color{blue}1}
. Also, Figure {\color{blue}2}
shows that the solution obtained by our method is close
to the exact solution
\begin{center}
\begin{table}[ht!]
\caption{A comparison of absolute errors
of Example 1 at different points with $h=1/100,k=1/200$.} 
\label{table:t11} 
\centering 
\begin{tabular}{l l l l l l l l l l l l} 
\hline
Method &\multicolumn{2}{|c|}{present method}&\multicolumn{3}{|c}{method in \cite{12}}\\
\hline
$x$&$\Gamma:2sin(\frac{k}{2})$ &$\Gamma:k$ & $\eta=\frac{1}{60},\gamma=\frac{1}{2}$
grid&&\\
\hline
\hline
0.2     &3.23677$\times10^{-5}$   &3.26277$\times10^{-5}$     &5.858898718658607$\times10^{-1}$&  \\
0.4     &5.32737$\times10^{-5}$    &5.32737$\times10^{-5}$    &9.479897263432836$\times10^{-1}$& \\
0.6     &5.32737$\times10^{-5}$   &5.32737$\times10^{-5}$     &9.479897263432840$\times10^{-1}$&  \\
0.8     &5.32737$\times10^{-5}$   &3.26277$\times10^{-5}$     &5.858898718658610$\times10^{-1}$&  \\
\hline
\hline
\end{tabular}
\end{table}
\end{center}
\begin{center}
\begin{table}[ht!]
\caption{A comparison of $L_{2}$ errors
of Example 1  at different partitions.} 
\label{table:t12} 
\centering 
\begin{tabular}{l l l l l l l l l l l l} 
\hline
Partitions&\multicolumn{2}{|c|}{N=100,k=0.01}&\multicolumn{3}{|c}{N=400,k=0.001}\\
\hline
$Time$&$\Gamma:2sin(\frac{k}{2})$ &$\Gamma:k$ &$\Gamma:2sin(\frac{k}{2})$ & $\Gamma:k$ \\
\hline
\hline
0.5   &1.5799$\times10^{-5}$    &1.58168$\times10^{-5}$   &1.57942$\times10^{-7}$   &7.91938$\times10^{-8}$ &  \\
1     &6.94858$\times10^{-6}$   &6.61076$\times10^{-6}$   &6.94312$\times10^{-8}$   &3.33013$\times10^{-8}$ &  \\
1.5   &1.50368$\times10^{-5}$   &1.51490$\times10^{-5}$   &1.50334$\times10^{-7}$   &7.57557$\times10^{-8}$ &  \\
2     &7.25141$\times10^{-6}$   &6.89868$\times10^{-6}$   &7.24547$\times10^{-8}$       &3.4743$\times10^{-8}$ &  \\
\hline
\hline
\end{tabular}
\end{table}
\end{center}
\begin{center}
\begin{figure}\label{fig:1}
\centering
\includegraphics[]{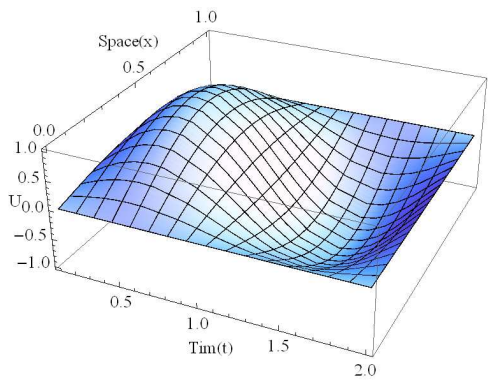}
\emph{\caption{The graph of the solution for Example 1 with $\Gamma(k)=k, N=200$ and k=0.01.}}
\end{figure}
\end{center}
\begin{center}
\begin{figure}
\centering
\includegraphics[]{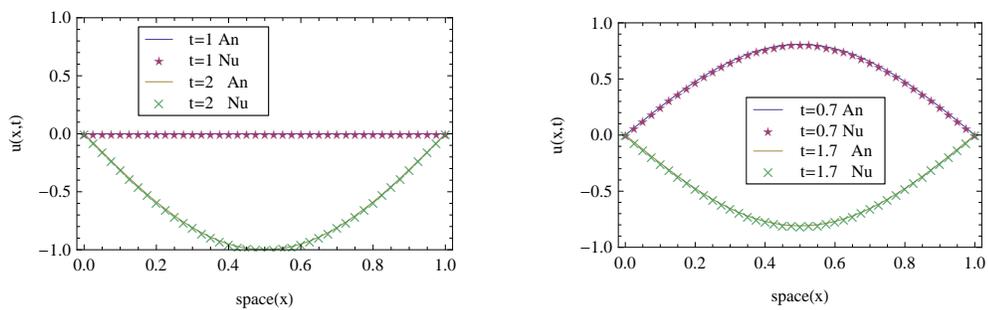}
\emph{\caption{Comparisons between numerical and analytical
solutions with $\Gamma(k)=k$ (left) and $\Gamma(k)=2sin(k/2)$ (right) for Example 1 at different times with $N=200,k=0.01$.}}
\end{figure}
\end{center}
\vskip .3cm \noindent \textbf{Example 2.}
We consider the hyperbolic
telegraph equation (\ref{1}) with $f(x,t)=(3-4\alpha+\beta^2)\exp(-2t)\sinh(x)$
and the analytical solution
$u(x,t)=\exp(-2t)\sinh(x)$ , in the interval
$[0,1]$. The boundary conditions and
the initial conditions are taken from the exact solution.  Tables \ref{table:t21} and \ref{table:t22} give a
comparison between the $L_{\infty}$ errors found by our method and the
method in \cite{7}. Also Table \ref{table:t22} shows $RMS$ and $L_2$ errors. Figure {\color{blue}3}, shows absolute error for
different values of time with $N=200, k=0.001$.
\begin{center}
\begin{table}[ht!]
\caption{A comparison of $L_{\infty}$ errors
of Example 2 for $\alpha=20$, $\beta=10$ at different time and $N=21,k=0.01$.} 
\label{table:t21} 
\centering 
\begin{tabular}{l l l l l l l l l l l l} 
\hline
Method &\multicolumn{2}{|c|}{present method}&\multicolumn{3}{|c}{method in \cite{7}}\\
\hline
$Time$&$\Gamma:2sin(\frac{k}{2})$ &$\Gamma:k$ & Uniform Grid & Nonuniform
grid\\
\hline
\hline
0.5   &2.73131$\times10^{-6}$  &2.59359$\times10^{-6}$  &2.25640$\times10^{-4}$   &2.22066$\times10^{-4}$ &  \\
1     &1.5714$\times10^{-6}$  &1.48998$\times10^{-6}$   &5.75205$\times10^{-3}$   &1.41294$\times10^{-4}$ &  \\
1.5   &7.01914$\times10^{-7}$  &6.65254$\times10^{-7}$  &2.43143$\times10^{-2}$   &6.93111$\times10^{-5   }$ &  \\
2     &2.8573$\times10^{-7}$  &2.70754$\times10^{-7}$    &-----------------     &2.27829$\times10^{-5}$ &  \\
\hline
\hline
\end{tabular}
\end{table}
\end{center}
\begin{center}
\begin{table}[ht!]
\caption{A comparison of $L_{\infty}$ errors
of Example 2 for $\alpha=20$, $\beta=10$ at different time and $N=21,k=0.0001$.} 
\label{table:t22} 
\centering 
\begin{tabular}{l l l l l l l l l l l l} 
\hline
Method &\multicolumn{2}{|c|}{present method}&\multicolumn{3}{|c}{method in \cite{7}}\\
\hline
$Time$&$\Gamma:2sin(\frac{k}{2})$ &$\Gamma:k$ & Uniform Grid & Nonuniform
grid\\
\hline
\hline
0.5   &1.9517$\times10^{-9}$  &1.92002$\times10^{-9}$  &2.23874$\times10^{-6}$   &2.21693$\times10^{-6}$ &  \\
1     &1.08081$\times10^{-9}$   &1.06278$\times10^{-9}$   &1.72404$\times10^{-5}$   &1.22688$\times10^{-6}$ &  \\
1.5   &4.75261$\times10^{-10}$  &4.67199$\times10^{-10}$  &4.14630$\times10^{-3}$   &6.73102$\times10^{-7}$ &  \\
2     &1.91059$\times10^{-10}$   &1.87808$\times10^{-10}$    &-----------------       &2.66660$\times10^{-7}$ &  \\
\hline
\hline
\end{tabular}
\end{table}
\end{center}
\begin{center}
\begin{table}[ht!]
\caption{ $L_{2}$ and $RMS$ errors
of Example 2 for $\alpha=20$, $\beta=10$ at different time and $N=21,k=0.01$.} 
\label{table:t23} 
\centering 
\begin{tabular}{l l l l l l l l l l l l} 
\hline
$\Gamma$ &\multicolumn{2}{|c|}{$2sin(\frac{k}{2})$}&\multicolumn{3}{|c}{$k$}\\
\hline
$Time$&$L_2$ &$RMS$ & $L_2$ &$RMS$
\\
\hline
\hline
0.2&2.36511$\times10^{-6}$   &1.08383$\times10^{-5}$   &2.25651$\times10^{-6}$  &1.03406$\times10^{-5}$&     \\
0.4 &2.82003$\times10^{-6}$   &1.2923$\times10^{-5}$   &2.67987$\times10^{-6}$  &1.22807$\times10^{-5}$&    \\
0.8 &2.0589$\times10^{-6}$    &9.43506$\times10^{-6}$  &1.95292$\times10^{-6}$  &8.94939$\times10^{-6}$ &    \\
1 &1.5714$\times10^{-6}$    &7.20105$\times10^{-6}$   &1.48999$\times10^{-6}$  &6.82797$\times10^{-6}$ &   \\
\hline
\hline
\end{tabular}
\end{table}
\end{center}
\begin{center}
\begin{figure}
\centering
\includegraphics[]{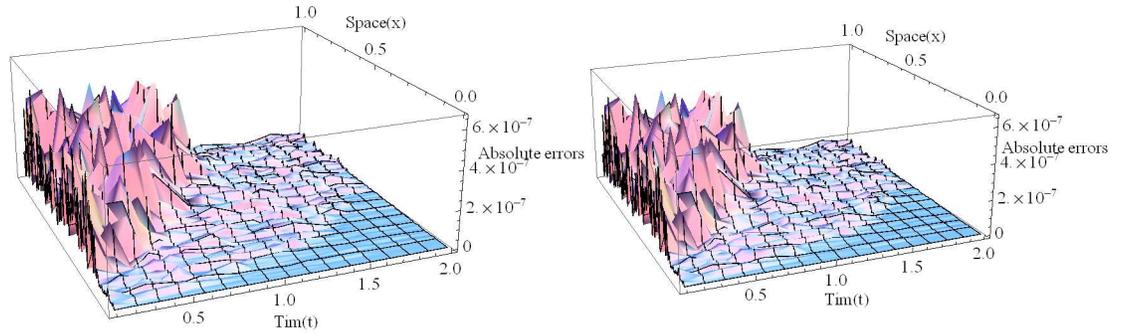}
\emph{\caption{Absolute errors with $\Gamma(k)=k$ (left) and $\Gamma(k)=2sin(k/2)$ (right) for Example 2 with $k=0.001$, $N=200$ and  $\alpha=20, \beta=10$.}}
\end{figure}
\end{center}
\vskip .3cm \noindent \textbf{Example 3.} In this example we Consider the hyperbolic
telegraph equation (\ref{1}) with $\alpha=10,\beta=5$ in $x\in[0,1]$ and $f(x,t)=-2\alpha \sin(t)\sin(x)+\beta^2\cos(t)\cos(x)$. The
exact solution for this case is $u(x,t)=\cos(t)\sin(x)$. The
boundary conditions and the initial conditions are taken from the
exact solution. In order to compare the solutions with \cite{7}, we
have taken $k=0.001$ and $N=21$. Table \ref{table:t31} gives a
comparison between the  $L_{\infty}$ error found by our method and by
method in \cite{7}.
Table \ref{table:t32} shows  $L_2$ in different partitions.
Table \ref{table:t33} shows $RMS$ and $L_2$ errors. Figure {\color{blue}4}, shows absolute error for
different values of time with $N=50, k=0.001$.
\begin{center}
\begin{table}[ht!]
\caption{A comparison of $L_{\infty}$ errors
for Example 3  at different time and $N=21,k=0.001$.} 
\label{table:t31} 
\centering 
\begin{tabular}{l l l l l l l l l l l l} 
\hline
Method &\multicolumn{2}{|c|}{present method}&\multicolumn{3}{|c}{method in \cite{7}}\\
\hline
$Time$&$\Gamma:2sin(\frac{k}{2})$ &$\Gamma:k$ & Uniform Grid & Nonuniform
grid\\
\hline
\hline
0.5   &7.59175$\times10^{-9}$  &5.35357$\times10^{-9}$  &1.67216$\times10^{-5}$   &1.28551$\times10^{-5}$ &  \\
1     &3.5984$\times10^{-9}$   &2.39252$\times10^{-8}$   &4.71302$\times10^{-4}$   &3.20793$\times10^{-5}$ &  \\
1.5   &1.61922$\times10^{-8}$  &4.2016$\times10^{-8}$  &8.62796$\times10^{-4}$   &5.71454$\times10^{-5}$ &  \\
2     &2.49884$\times10^{-8}$   &5.20784$\times10^{-8}$   &-----------------        &6.82827$\times10^{-5}$ &  \\
\hline
\hline
\end{tabular}
\end{table}
\end{center}
\begin{center}
\begin{table}[ht!]
\caption{A comparison of $L_{2}$ errors
of Example 3  at different partitions.} 
\label{table:t32} 
\centering 
\begin{tabular}{l l l l l l l l l l l l} 
\hline
Partitions&\multicolumn{2}{|c|}{N=200,k=0.01}&\multicolumn{3}{|c}{N=400,k=0.001}\\
\hline
$Time$&$\Gamma:2sin(\frac{k}{2})$ &$\Gamma:k$ &$\Gamma:2sin(\frac{k}{2})$ & $\Gamma:k$ \\
\hline
\hline
0.5   &3.69769$\times10^{-8}$    &2.46374$\times10^{-8}$   &2.65293$\times10^{-10}$   &1.73258$\times10^{-10}$ &  \\
1     &1.40829$\times10^{-8}$   &1.16929$\times10^{-7}$   &9.95706$\times10^{-11}$   &8.27160$\times10^{-10}$ &  \\
1.5   &8.1959$\times10^{-8}$   &2.09527$\times10^{-7}$   &5.80334$\times10^{-10}$   &1.48296$\times10^{-10}$ &  \\
2     &1.28423$\times10^{-7}$   &2.61864$\times10^{-7}$   &9.09458$\times10^{-10}$   &1.85373$\times10^{-9}$ &  \\
\hline
\hline
\end{tabular}
\end{table}
\end{center}
\begin{center}
\begin{center}
\begin{table}[ht!]
\caption{ $L_{2}$ and $RMS$ errors
of Example 3 at different time and $N=80,k=0.05$.} 
\label{table:t33} 
\centering 
\begin{tabular}{l l l l l l l l l l l l} 
\hline
$\Gamma$ &\multicolumn{2}{|c|}{$2sin(\frac{k}{2})$}&\multicolumn{3}{|c}{$k$}\\
\hline
$Time$&$L_2$ &$RMS$ & $L_2$ &$RMS$
\\
\hline
\hline
0.5&1.37362$\times10^{-6}$   &1.2286$\times10^{-5}$   &9.94623$\times10^{-7}$  &8.89618$\times10^{-6}$&     \\
1 &5.5496$\times10^{-7}$   &4.96372$\times10^{-6}$     &4.61873$\times10^{-6}$  &4.13112$\times10^{-5}$&    \\
1.5 &3.22627$\times10^{-6}$    &2.88566$\times10^{-5}$  &8.26127$\times10^{-6}$  &7.38911$\times10^{-5}$ &    \\
2 &5.05609$\times10^{-6}$    &4.52231$\times10^{-5}$    &1.03189$\times10^{-5}$  &9.22947$\times10^{-5}$ &   \\
\hline
\hline
\end{tabular}
\end{table}
\end{center}

\begin{figure}
\centering
\includegraphics[]{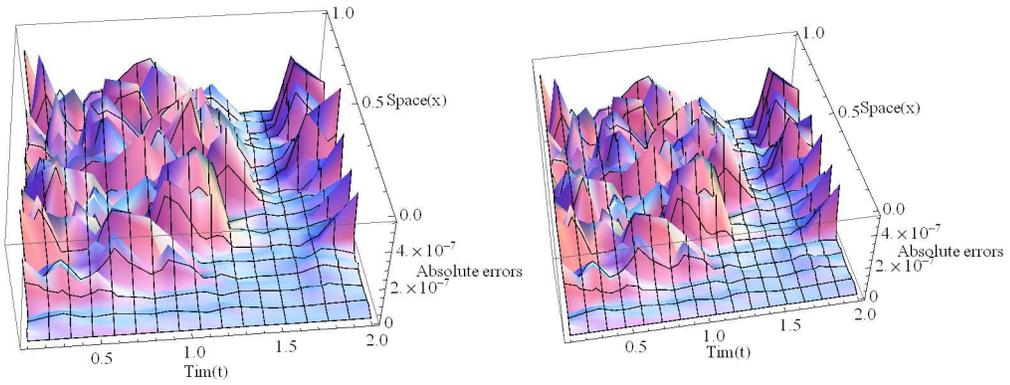}
\emph{\caption{Absolute errors with $\Gamma(k)=k$ (left) and $\Gamma(k)=2sin(k/2)$ (right) for Example 3 with $k=0.001$, $N=50$.}}
\end{figure}
\end{center}
\section{Conclusion}
The quintic B-spline collocation method is used to solve the
one-dimensional hyperbolic telegraph equation with initial and
boundary conditions. The convergence analysis of the method is shown. The numerical solutions are compared with the
exact solution by finding the RMS ,$L_2$ and $L_\infty$ errors.The
numerical results given in the previous section demonstrate the
good accuracy of the scheme proposed in this research.

\end{document}